\definecolor{dullmagenta}{rgb}{0.4,0,0.4}   
\definecolor{darkblue}{rgb}{0,0,0.4}
\definecolor{darkgreen}{rgb}{0,0.4,0}
\newtheorem{theorem}{Theorem}
\newtheorem*{theorem*}{Theorem}
\newtheorem{lemma}[theorem]{Lemma}
\newtheorem*{lemma*}{Lemma}
\newtheorem{proposition}[theorem]{Proposition}
\newtheorem{corollary}[theorem]{Corollary}
\newtheorem{remark}[theorem]{Remark}
\numberwithin{equation}{section}
\numberwithin{theorem}{section}
\begin{document}

\title{Extremizers and sharp weak-type estimates for positive dyadic shifts}

\author{Guillermo Rey}
\address{Department of Mathematics, Michigan State University, East Lansing MI 48824-1027}
\email{reyguill@math.msu.edu}
\thanks{The first author was partially supported by NSF grant DMS-1056965}

\author{Alexander Reznikov}
\address{Department of Mathematics, Michigan State University, East Lansing MI 48824-1027}
\email{rezniko2@msu.edu}

\begin{abstract}
	We find the exact Bellman function for the weak $L^1$ norm of local positive dyadic shifts. We also describe a sequence of functions, self-similar in nature, which in the limit extremize the local weak-type (1,1) inequality.
\end{abstract}

\maketitle

\section{Introduction}

The purpose of this article is to study the weak-type $(1,1)$ boundedness of the operator
\[
	\mathcal{A} f = \sum_{Q \in \mathcal{D}(I)} \alpha_Q \langle f \rangle_Q \mathbbm{1}_Q.
\]
Here $I$ denotes any finite interval in $\mathbb{R}$, $\langle f \rangle_I = \frac{1}{|I|}\int_I f$, $\mathcal{D}(I)$ denotes the dyadic grid consisting of dyadic subintervals of $I$ and $\{\alpha_J\}_{J \in \mathcal{D}(I)}$ 
is a Carleson sequence adapted to $I$, i.e.: $\alpha_J \geq 0$ for all $J \in \mathcal{D}(I)$ and
\[
	\sup_{J \in \mathcal{D}(I)} \frac{1}{|J|} \sum_{K \in \mathcal{D}(J)} \alpha_K |K| = C <\infty.
\]

These operators have recently appeared in the works of A. K. Lerner \cite{Lerner2010} and \cite{Lerner2012}, where $\alpha_K$ was a binary sequence, although the ideas go back to \cite{Garnett1982}. Hence, we will call them \textit{Lerner operators} in the sequel.
Here we find the exact Bellman function describing the local boundedness of $\mathcal{A}$ from $L^1$ to $L^{1,\infty}$.

It is easy to see that the operator $\mathcal{A}$ is bounded in $L^2$. This, together with a decomposition of Calder\'on-Zygmund type, can be used to prove an estimate of the form
\[
	\Bigl|\Bigl\{x\in I: \, \Bigl| \sum_{J \in \mathcal{D}(I)} \alpha_J \langle f \rangle_J \mathbbm{1}_J(x)\Bigr| > \lambda\Bigr\}\Bigr| \leq \frac{C}{\lambda} \int_{I} |f|.
\]
However, here we precisely describe how the best constant in the above inequality changes with respect to the parameters of the problem. 

The main result of the article is the following theorem:
\begin{theorem}\label{MainTheorem}
	Let $A$, $\lambda$ and $t$ be positive numbers and $I$ an interval in $\mathbb{R}$, then
	\[
		\sup \frac{1}{|I|}\Bigl|\Bigl\{x\in I: \, \sum_{J \in \mathcal{D}(I)} \alpha_J \langle f \rangle_J \mathbbm{1}_J(x) > \lambda\Bigr\}\Bigr| =
			\begin{cases}
				\frac{2At}{A\lambda+t} & \text{if } 0 \leq t \leq A\lambda \leq \lambda, \\
				\vspace{-10pt}\\
				\sqrt{\frac{At}{\lambda}} &\text{if } 0 \leq A \leq \min\Bigl( \frac{t}{\lambda}, \frac{\lambda}{t} \Bigr), \\
				1 &\text{otherwise.}
			\end{cases}
	\]
	Where the supremum is taken over all nonnegative functions $f$ with $\langle f \rangle_I = t$ and all nonnegative sequences $\{\alpha_J\}_{J \in \mathcal{D}(I)}$ with Carleson constant at most $1$ which satisfy
	\[
		\frac{1}{|I|} \sum_{J \in \mathcal{D}(I)} \alpha_J|J| = A.
	\]
\end{theorem}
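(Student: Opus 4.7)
The plan is to treat this as a sharp Bellman function computation. First I would define
\[
    B(A, t, \lambda) := \sup \frac{1}{|I|}\Bigl|\Bigl\{x \in I : \sum_{J \in \mathcal{D}(I)} \alpha_J \langle f\rangle_J \mathbbm{1}_J(x) > \lambda\Bigr\}\Bigr|,
\]
with the supremum taken over the admissible class of the theorem. Affine invariance makes the value independent of $I$, and replacing $f$ by $cf$ yields the homogeneity $B(A, ct, c\lambda) = B(A, t, \lambda)$; hence it suffices to compute $B(A, t, 1)$. The three regimes of the claimed formula correspond to the natural phases $At > 1$ (trivially forcing $B = 1$ by putting all the Carleson mass at $I$), $t \le A$, and $A \le t \le 1/A$.

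The engine of the proof is the dyadic Bellman inequality. Split $I$ into its two dyadic children $I^\pm$; any admissible triple restricts to admissible triples on each child with parameters related by
\[
    t = \tfrac{1}{2}(t^+ + t^-), \qquad A = \alpha_I + \tfrac{1}{2}(A^+ + A^-), \qquad \alpha_I \ge 0, \quad A^\pm \le 1.
\]
Because $\mathcal{A}f|_{I^\pm} = \alpha_I t + \mathcal{A}^\pm(f|_{I^\pm})$, where $\mathcal{A}^\pm$ is the Lerner operator built from the restricted sequence on $I^\pm$, taking the supremum over admissible data gives
\[
    B(A, t, \lambda) \ge \tfrac{1}{2}\bigl[B(A^+, t^+, \lambda - \alpha_I t) + B(A^-, t^-, \lambda - \alpha_I t)\bigr]
\]
for every admissible splitting. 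Letting $M$ denote the three-piece candidate from the theorem, I would then establish the matching reverse inequality for $M$, and iterate it down to a depth at which $\mathcal{A}f$ is essentially constant on the leaves in order to deduce $B \le M$.

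The lower bound $B \ge M$ comes from exhibiting an explicit saturating sequence. In each of the first two regimes, the equality case of the Bellman inequality for $M$ singles out an optimal asymmetric split: one child inherits a configuration that is again in the same regime (in rescaled variables obtained after replacing $\lambda$ by $\lambda - \alpha_I t$), while the sibling is driven into the terminal regime where $M \equiv 1$. Iterating this self-similar recipe down the dyadic tree produces a sequence $(f_n, \alpha^{(n)})$ whose normalized distribution functions converge to the right-hand side of the claimed formula, matching the upper bound and proving sharpness.

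The principal obstacle is verifying that $M$ itself satisfies the Bellman inequality. Because $M$ is only $C^0$ across the phase-transition curves $t = A\lambda$ and $At = \lambda$, and because the splitting map can send the parent regime into any combination of the three child regimes, the inequality must be checked not only within each regime but transversally to the singular curves as well. A secondary but nontrivial point is to confirm that the self-similar extremal construction respects the Carleson condition $A \le 1$ uniformly at every scale, which dictates how the mass parameter $\alpha_I$ must be tuned at each level of the recursion.
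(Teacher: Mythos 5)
Your plan is the paper's plan: define the Bellman function, reduce by homogeneity to $\lambda=1$, prove the dyadic main inequality, show that any (continuous) supersolution with the obstacle condition dominates $\mathbb{B}$ by iterating down the tree, and match it from below with self-similar extremizers. But as written the proposal defers exactly the two load-bearing steps, and in both cases the sketch as stated would not go through without the missing ingredient. For the upper bound, you flag ``verify that $M$ satisfies the Bellman inequality'' as the principal obstacle but offer no mechanism, and you misdiagnose the regularity: across the interface $t=A\lambda$ the two formulas $\frac{2At}{A\lambda+t}$ and $\sqrt{At/\lambda}$ actually glue together $C^1$ (the gradients match on $t=A\lambda$), which is precisely what lets one conclude global concavity in $(t,A)$ from concavity on each piece; across $At=\lambda$ the candidate is only $C^0$, but there one uses instead that the pointwise minimum of two supersolutions (the second formula and the constant $1$) is again a supersolution. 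Without one of these two observations, ``check the inequality transversally to the singular curves'' is not a proof. One also needs the monotonicity of the candidate in the direction $(0,1,t)$ (equivalently $M_y-x^2M_x\ge 0$ in the reduced variables), which your splitting inequality requires in order to absorb the $\alpha_I$ term; you never verify it.

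For the lower bound, your recipe (``one child stays in the same regime, the sibling is driven into the terminal regime'') does not describe the correct extremal dynamics in the regime $t\le A\lambda$, and following it literally would give the wrong constant. In the paper's construction the roles are reversed: one child is a \emph{rescaled copy of the entire parent configuration} (which forces a fixed-point/self-similarity construction of $f$ and of the Carleson sequence, carried out via a contraction argument), while the other child only moves an $O(\epsilon)$ step along the boundary $A=1$ toward the obstacle, reaching it after $N(\epsilon)\to\infty$ steps; the value $\frac{2x}{x+1}$ then emerges as the limit of the accumulated loss $\prod_{j=0}^{N}\frac{x_j}{x_j+2\epsilon}$, computed by recognizing $\sum_j \frac{2\epsilon}{1-\delta^j(1+x)}$ as a Riemann sum for $\int_1^{2/(1+x)}\frac{dy}{y(1-y(x+1))}=\log\frac{2x}{x+1}$. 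Two further points your sketch skips: since $\mathbb{B}$ is not known a priori to be continuous, the superlinearity along rays through the origin (needed to ``absorb'' the self-similar child) must be extracted from midpoint concavity alone, which requires a separate elementary lemma; and in the regime $A\le\min(t/\lambda,\lambda/t)$ the sharpness at the obstacle $At=\lambda$ itself requires a (simple but explicit) example, since the constant-function example only covers $At>\lambda$. So the architecture is right, but the proposal stops where the actual work begins.
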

We also provide a sequence of examples which, in the limit, attain the supremum of the previous result. See the last section for details on the structure of such examples.

As an immediate corollary we have the following local weak-type (1,1) estimate:
\begin{corollary}
	For any nonnegative $f \in L^1([0,1))$ and for any Carleson sequence $\{\alpha_J\}_{J \in \mathcal{D}([0,1))}$ with constant at most $1$ we have the sharp bound
	\[
		\bigl| \bigl\{ x\in [0,1):\, \mathcal{A}f(x) > \lambda \bigr\} \bigr| \leq
			\begin{cases}
				\frac{2\|f\|_{L^1}}{\lambda + \|f\|_{L^1}} & \text{if } \|f\|_{L^1} \leq \lambda \\
				1 & \text{if } \|f\|_{L^1} \geq \lambda,
			\end{cases}
	\]
	which in particular implies that
	\[
		\|\mathcal{A}f\|_{L^{1,\infty}([0,1))} \leq 2\|f\|_{L^1([0,1))},
	\]
	and that the constant $2$ is sharp.

\end{corollary}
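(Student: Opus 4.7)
The plan is to apply Theorem \ref{MainTheorem} with $I = [0,1)$ and to optimize the three-case expression on the right-hand side over the remaining free parameter $A$. Since $|I|=1$ we have $t = \langle f\rangle_I = \|f\|_{L^1}$, and for any Carleson sequence with constant at most $1$ the quantity $A = \sum_J \alpha_J |J|$ satisfies $A \in [0,1]$. So the corollary will follow once we compute
\[
	\Phi(t,\lambda) := \sup_{A \in [0,1]} F(A,t,\lambda),
\]
where $F(A,t,\lambda)$ denotes the three-branch function appearing in Theorem \ref{MainTheorem}.

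First I would treat the case $t \leq \lambda$, so that $t/\lambda \leq 1 \leq \lambda/t$. Here case (ii) of the theorem applies on $A \in [0,t/\lambda]$ and case (i) on $A \in [t/\lambda, 1]$; a direct check shows that both branches agree at the transition point $A = t/\lambda$ with common value $t/\lambda$. On $[0,t/\lambda]$ the function $\sqrt{At/\lambda}$ is increasing in $A$. On $[t/\lambda,1]$ a short computation gives $\partial_A \bigl(\tfrac{2At}{A\lambda+t}\bigr) = \tfrac{2t^2}{(A\lambda+t)^2} > 0$, so the supremum is attained at $A=1$ and equals $\tfrac{2t}{t+\lambda}$. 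Next, for $t \geq \lambda$, case (i) requires $A \geq t/\lambda \geq 1$ and so never applies for $A\leq 1$; case (ii) on $[0,\lambda/t]$ attains its maximum $1$ at $A=\lambda/t$, and case (iii) trivially gives $1$ on the rest. Thus $\Phi(t,\lambda)=1$ when $t\geq \lambda$. These two computations are precisely the two branches in the statement.

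For the weak-type bound, write $t=\|f\|_{L^1}$ and estimate $\lambda \cdot |\{\mathcal{A}f>\lambda\}|$ in two regimes. For $\lambda \geq t$ the distributional bound gives $\lambda \cdot |\{\mathcal{A}f>\lambda\}| \leq \tfrac{2t\lambda}{\lambda+t} \leq 2t$. For $\lambda < t$ we just use $|\{\mathcal{A}f>\lambda\}| \leq 1$ and $\lambda < t$, so the quantity is bounded by $t \leq 2t$. Taking the supremum in $\lambda$ gives $\|\mathcal{A}f\|_{L^{1,\infty}} \leq 2\|f\|_{L^1}$. Finally, sharpness of the constant $2$ follows by letting $\lambda/t \to \infty$ in the first regime: the bound $\tfrac{2t\lambda}{\lambda+t}$ tends to $2t$, and by the second assertion of Theorem \ref{MainTheorem} (existence of near-extremizers) one can realize pairs $(f,\{\alpha_J\})$ for which $\lambda \cdot |\{\mathcal{A}f>\lambda\}|$ comes arbitrarily close to $2\|f\|_{L^1}$.

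The only step requiring care is the piecewise optimization in $A$, in particular verifying that the two non-trivial branches glue continuously at $A=t/\lambda$ and that the combined function is monotone up to $A=1$; everything else is essentially substitution and reading off the theorem.
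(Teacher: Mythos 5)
Your proposal is correct and is precisely the derivation the paper has in mind (the paper labels the corollary ``immediate'' and omits the details): substitute $I=[0,1)$, note $A\in[0,1]$, and maximize the three-branch expression of Theorem \ref{MainTheorem} over $A$, checking that the two nontrivial branches glue at $A=t/\lambda$ and are increasing up to $A=1$. The subsequent passage to the $L^{1,\infty}$ bound and the sharpness of the constant $2$ via $\lambda/t\to\infty$ are also as intended.
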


Operators similar to these were recently studied in \cite{Melas2005}, \cite{Melas2008}, \cite{Melas2009} and \cite{Melas2009a}, however their results are slightly different from ours. They
consider the supremum taken over all functions $f$ satisfying
\[
	\int_I f = s \quad \text{and} \quad \int_I G(f) = t,
\]
where $G$ is a strictly convex function satisfying $G(x)/x \to \infty$ as $x \to \infty$. This does not include the question of boundedness from $L^1$ to $L^{1,\infty}$. Our method of proof is different
than the one used in the articles cited above, where they use the deep combinatorial properties of these operators. See also the monograph \cite{OsekowskiBook} by 
A. Os{\setbox0=\hbox{e}{\ooalign{\hidewidth
    \lower1.5ex\hbox{`}\hidewidth\crcr\unhbox0}}}kowski for related results. We instead follow the ideas in \cite{Slavin2008} and \cite{Vasyunin2009} to solve the Bellman
PDE and prove its sharpness.

This problem is also closely related to studying Haar shifts, the main difference being that Haar shifts are not positive operators. It has been shown however, see \cite{Cruz-Uribe2012}, that
Lerner-type operators can be used to bound Haar shifts. The reader can find results similar to ours in \cite{RVV-UnweightedMartingale}, \cite{NRVV-WeightedHaarShifts} and \cite{NV-Square}.

The article is organized as follows. In Section 2 we explain how the Bellman function technique is used to compute the supremum in Theorem \ref{MainTheorem}. In Section 3 we give a supersolution to the Bellman
variational problem which serves as an upper bound for the exact Bellman function. Finally, in Section 4 we show that the function we found in the previous section is the exact Bellman function, we also 
give a sequence of examples which, in the limit, extremize the inequality of Theorem \ref{MainTheorem}.

\section*{Acknowledgements}
	The authors would like to thank Alexander Volberg for originally proposing the problem and for many valuable discussions.

\section{The Bellman function technique}

Consider the function defined in $\Omega = \{(t, A, \lambda):\, 0 \leq t, 0 \leq A \leq 1, \, \lambda \in \mathbb{R}\}$
\[
	\mathbb{B}(t, A,\lambda) = \sup\Bigl\{ \frac{1}{|I|}\Bigl| \bigl\{x\in I:\, \sum_{J \in \mathcal{D}(I)}\alpha_J \langle f \rangle_J \mathbbm{1}_J > \lambda \bigr\} \Bigr|\Bigr\},
\]
where the supremum is taken over all all nonnegative functions $f$ on $I$ with $\langle f \rangle_I = t$ and all Carleson sequences $\{\alpha_J\}_{j \in \mathcal{D}(I)}$ with constant at most $1$ and
\[
	A = \frac{1}{|I|}\sum_{J \subseteq I} \alpha_J |J|.
\]
Note that $I$ is \emph{not} a parameter in $\mathbb{B}$, this is because the supremum is invariant under dilations and translations in $I$, and hence independent of $I$.

The Bellman function technique, which first appeared in the 1995 preprint version of \cite{Nazarov1999}, is based on showing that $\mathbb{B}$ solves a certain minimization problem. One first shows that 
$\mathbb{B}$ satisfies a kind of concavity property and explicitly computes $\mathbb{B}$ in a subdomain 
natural to the problem (this is usually easy). Then one shows that any continuous positive function satisfying these conditions majorizes $\mathbb{B}$, which reduces the problem to finding the smallest function which satisfies these properties.
Finally one has to actually find such a function, this is usually the hardest part. The reader can find insightful introductions in \cite{Nazarov2001} and \cite{Nazarov1996}, see also \cite{Nazarov1999}, \cite{Slavin2008}, and \cite{Vasyunin2009}
for more examples of this technique.

Let us begin by describing more precisely the concavity property which $\mathbb{B}$ satisfies:
\begin{lemma}[Main inequality]\label{Bellman.MainInequality}
	\begin{equation}\label{Bellman.MainInequality.eq}
		\mathbb{B}(t,A,\lambda) \geq \frac{1}{2}\Bigl( \mathbb{B}(t_1,A_1,\lambda') + \mathbb{B}(t_2,A_2,\lambda') \Bigr)
	\end{equation}
	whenever
	\[
		t = \frac{t_1+t_2}{2}, \quad A = \frac{A_1+A_2}{2} + \alpha \quad \text{and} \quad \lambda = \lambda' + \alpha t
	\]
	and $\alpha \geq 0$.
\end{lemma}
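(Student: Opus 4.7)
\medskip

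\textbf{Proof plan.} The plan is to prove the inequality by a standard splitting argument: build a near-optimal competitor for $\mathbb{B}(t,A,\lambda)$ by pasting together near-optimal competitors for $\mathbb{B}(t_i,A_i,\lambda')$ on the two dyadic children of $I$, then add a single extra weight on $I$ itself to account for the jump from $\lambda'$ to $\lambda$.

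First I would exploit the scale/translation invariance noted right after the definition of $\mathbb{B}$ to fix, without loss of generality, any convenient interval $I$ with dyadic children $I_-$ and $I_+$ of equal length. Given $\varepsilon>0$, choose a nonnegative $f_i$ on $I_i$ with $\langle f_i\rangle_{I_i}=t_i$ and a Carleson sequence $\{\alpha^{(i)}_J\}_{J\in\mathcal{D}(I_i)}$ with constant $\leq 1$ and average $A_i$ which is within $\varepsilon$ of realising $\mathbb{B}(t_i,A_i,\lambda')$, for $i=1,2$. Glue these into $f := f_1 \mathbbm{1}_{I_-} + f_2 \mathbbm{1}_{I_+}$ and define $\alpha_I := \alpha$, $\alpha_J := \alpha^{(i)}_J$ for $J\in\mathcal{D}(I_i)$. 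Then $\langle f\rangle_I=(t_1+t_2)/2=t$, as required.

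Next I would verify that $\{\alpha_J\}$ is Carleson with constant at most $1$ and has full average equal to $A$. For any proper dyadic subinterval $K\subsetneq I$, the summation is unchanged from the corresponding subproblem, so the Carleson sum over $\mathcal{D}(K)$ is $\leq 1$ by hypothesis on $\{\alpha^{(i)}_J\}$. At the top,
\[
\frac{1}{|I|}\sum_{J\in\mathcal{D}(I)}\alpha_J|J| \;=\; \alpha + \tfrac{1}{2}(A_1+A_2) \;=\; A \;\leq\; 1,
\]
using that $(t,A,\lambda)$ lies in the domain of $\mathbb{B}$. Hence $(f,\{\alpha_J\})$ is an admissible pair for the parameters $(t,A,\lambda)$.

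Then I would compute the level set. For $x\in I_-$, the only new term added to $\sum_J \alpha_J\langle f\rangle_J\mathbbm{1}_J(x)$ by the gluing is $\alpha\langle f\rangle_I=\alpha t$; thus
\[
\Bigl\{x\in I_-:\sum_{J\in\mathcal{D}(I)}\alpha_J\langle f\rangle_J\mathbbm{1}_J(x)>\lambda\Bigr\}
=\Bigl\{x\in I_-:\sum_{J\in\mathcal{D}(I_-)}\alpha_J^{(1)}\langle f_1\rangle_J\mathbbm{1}_J(x)>\lambda-\alpha t=\lambda'\Bigr\},
\]
and analogously on $I_+$. Dividing by $|I|=2|I_\pm|$ and taking the sup-approximating choices of $f_i$,
\[
\mathbb{B}(t,A,\lambda)\geq \tfrac{1}{|I|}\bigl|\{x\in I:\sum_J\alpha_J\langle f\rangle_J\mathbbm{1}_J>\lambda\}\bigr|
\geq \tfrac{1}{2}\bigl(\mathbb{B}(t_1,A_1,\lambda')+\mathbb{B}(t_2,A_2,\lambda')\bigr)-\varepsilon,
\]
and letting $\varepsilon\downarrow 0$ finishes the argument.

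The only potentially delicate point is the Carleson verification at the top cube, which is what forces the relation $A=(A_1+A_2)/2+\alpha$ with $\alpha\geq 0$; everything else is bookkeeping. There is no genuine obstacle here, since the dyadic splitting and scale invariance were designed precisely so that this inequality holds with equality up to the freedom of choosing $\alpha$ on $I$.
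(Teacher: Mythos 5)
Your proposal is correct and follows essentially the same route as the paper: split the level set over the two dyadic children, observe that the top-level term $\alpha_I\langle f\rangle_I=\alpha t$ shifts the threshold from $\lambda$ to $\lambda'=\lambda-\alpha t$ on each child, and take the supremum over glued competitors. The only difference is cosmetic (you pick $\varepsilon$-optimal competitors and spell out the Carleson verification, whereas the paper quantifies over all admissible pairs and takes the supremum at the end).
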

\begin{proof}
	Consider any dyadic interval $I$, any function $f \geq 0$ satisfying
	\[
		\langle f \rangle_{I_-} = t_1 \quad \text{and} \quad \langle f \rangle_{I_+} = t_2
	\]
	and any Carleson sequence $\{\alpha_J\}_{J \in \mathcal{D}(I)}$ with constant at most $1$ on $I$ satisfying
	\[
		\frac{1}{|I_-|} \sum_{J \in \mathcal{D}(I_-)} \alpha_J |J| = A_1, \quad \frac{1}{|I_-|} \sum_{J \in \mathcal{D}(I_+)} \alpha_J |J| = A_2 \quad \text{and} \quad \alpha_I = \alpha.
	\]	
	Suppose also that $\lambda = \lambda' + \alpha t$.
	
	Since $\langle f \rangle_I = t$ then we must have
	\begin{align*}
		\mathbb{B}(t,A,\lambda) &\geq \frac{1}{|I|}\Bigl| \Bigl\{ x\in I: \, \sum_{J \in \mathcal{D}(I)} \alpha_J \langle f \rangle_J \mathbbm{1}_J(x) > \lambda \Bigr\} \Bigr|
	\end{align*}
	since the supremum defining $\mathbb{B}$ is taken over a larger space.
	
	Observe now that
	\begin{multline*}
		\frac{1}{|I|}\Bigl| \Bigl\{ x\in I: \, \sum_{J \in \mathcal{D}(I)} \alpha_J \langle f \rangle_J \mathbbm{1}_J(x) > \lambda \Bigr\} \Bigr| = \\
			\frac{1}{2|I_-|}\Bigl| \Bigl\{ x\in I_- : \, \sum_{J \in \mathcal{D}(I)} \alpha_J \langle f \rangle_J \mathbbm{1}_J(x) > \lambda \Bigr\} \Bigr| + \\
			\frac{1}{2|I_+|}\Bigl| \Bigl\{ x\in I_+ : \, \sum_{J \in \mathcal{D}(I)} \alpha_J \langle f \rangle_J \mathbbm{1}_J(x) > \lambda \Bigr\} \Bigr| \\
			=\frac{1}{2|I_-|}\Bigl| \Bigl\{ x\in I_- : \, \sum_{J \in \mathcal{D}(I_-)} \alpha_J \langle f \rangle_J \mathbbm{1}_J(x) > \lambda - \alpha_I t \Bigr\} \Bigr| + \\
			\frac{1}{2|I_+|}\Bigl| \Bigl\{ x\in I_+ : \, \sum_{J \in \mathcal{D}(I_+)} \alpha_J \langle f \rangle_J \mathbbm{1}_J(x) > \lambda - \alpha_I t \Bigr\} \Bigr| \\
			=\frac{1}{2|I_-|}\Bigl| \Bigl\{ x\in I_- : \, \sum_{J \in \mathcal{D}(I_-)} \alpha_J \langle f \rangle_J \mathbbm{1}_J(x) > \lambda' \Bigr\} \Bigr| +\\ 
			\frac{1}{2|I_+|}\Bigl| \Bigl\{ x\in I_+ : \, \sum_{J \in \mathcal{D}(I_+)} \alpha_J \langle f \rangle_J \mathbbm{1}_J(x) > \lambda' \Bigr\} \Bigr|
	\end{multline*}
	and thus the claim follows.
\end{proof}

Also, we trivially see that $\mathbb{B}$ must satisfy the following ``obstacle'' condition:
\begin{equation}\label{Bellman.Obstacle}
	\mathbb{B}(t,A,\lambda) = 1 \quad \text{whenever } \lambda < 0.
\end{equation}

As we described in the beginning of the section, the function $\mathbb{B}$ is a minimizer in the space of positive functions which satisfy these properties. The following proposition makes this precise:
\begin{proposition} \label{Bellman.BellmanDoesIt}
	Suppose a continuous function $F$ satisfies inequality \eqref{Bellman.MainInequality.eq} together with the obstacle condition \eqref{Bellman.Obstacle}, then we must have
	\[
		\mathbb{B}(t,A,\lambda) \leq F(t,A,\lambda).
	\]
\end{proposition}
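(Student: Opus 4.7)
The plan is to iterate the main inequality \eqref{Bellman.MainInequality.eq} down the dyadic tree and use the obstacle condition to extract the measure of the level set in the limit. Fix an admissible pair $(f,\{\alpha_J\}_{J\in\mathcal{D}(I)})$ with $\langle f\rangle_I=t$, Carleson constant at most $1$, and $\frac{1}{|I|}\sum_J\alpha_J|J|=A$. To every $J\in\mathcal{D}(I)$ I attach
\[
t_J=\langle f\rangle_J,\qquad A_J=\frac{1}{|J|}\sum_{K\in\mathcal{D}(J)}\alpha_K|K|,\qquad \lambda_J=\lambda-\sum_{\substack{K\in\mathcal{D}(I)\\ K\supsetneq J}}\alpha_K\langle f\rangle_K,
\]
so that $(t_I,A_I,\lambda_I)=(t,A,\lambda)$. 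Splitting the sum defining $A_J$ into its $\alpha_J$-term and the descendant contributions from $J_\pm$, and using $|J_\pm|=|J|/2$, one verifies the recursions
\[
t_J=\tfrac12(t_{J_-}+t_{J_+}),\qquad A_J=\tfrac12(A_{J_-}+A_{J_+})+\alpha_J,\qquad \lambda_{J_\pm}=\lambda_J-\alpha_J t_J,
\]
with $\alpha_J\ge0$. Since $A_J\le1$ by the Carleson condition applied to $J$, the triple $(t_J,A_J,\lambda_J)$ lies in $\Omega$ and the hypotheses of Lemma \ref{Bellman.MainInequality} are met at every node of the tree.

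Applying the main inequality (assumed to hold for $F$) at $I$, then at each child, and inducting through $n$ generations yields
\[
F(t,A,\lambda)\ \ge\ \frac{1}{|I|}\sum_{\substack{J\in\mathcal{D}(I)\\ |J|=2^{-n}|I|}}F(t_J,A_J,\lambda_J)\,|J|.
\]
Because $F\ge0$ throughout $\Omega$ and $F\equiv1$ on $\{\lambda<0\}$ by \eqref{Bellman.Obstacle}, discarding every term with $\lambda_J\ge0$ gives
\[
F(t,A,\lambda)\ \ge\ \frac{|E_n|}{|I|},\qquad E_n:=\bigcup\bigl\{J\in\mathcal{D}(I):|J|=2^{-n}|I|,\ \lambda_J<0\bigr\}.
\]

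To finish I pass $n\to\infty$. For $x\in I$ let $J_n(x)$ denote the dyadic interval of length $2^{-n}|I|$ containing $x$; then $\lambda_{J_n(x)}<0$ is equivalent to $S_n(x):=\sum_{K\supsetneq J_n(x)}\alpha_K\langle f\rangle_K>\lambda$, where the sum runs over $K\in\mathcal{D}(I)$. The $S_n(x)$ are nondecreasing in $n$ and every $K\in\mathcal{D}(I)$ containing $x$ is eventually a strict ancestor of $J_n(x)$, so $\sup_n S_n(x)=\mathcal{A}f(x)$ in $[0,+\infty]$. Hence $\{E_n\}$ is increasing and $\bigcup_n E_n=\{x\in I:\mathcal{A}f(x)>\lambda\}$, and monotone convergence yields
\[
F(t,A,\lambda)\ \ge\ \frac{1}{|I|}\bigl|\{x\in I:\mathcal{A}f(x)>\lambda\}\bigr|.
\]
Taking the supremum over admissible $(f,\{\alpha_J\})$ gives $\mathbb{B}(t,A,\lambda)\le F(t,A,\lambda)$. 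The main obstacle is essentially bookkeeping: setting up the triples $(t_J,A_J,\lambda_J)$ so the parent-child identities match Lemma \ref{Bellman.MainInequality} exactly, and then identifying the increasing union $\bigcup_n E_n$ with the level set $\{\mathcal{A}f>\lambda\}$. Continuity of $F$ plays no active role beyond ensuring the pointwise evaluations of $F$ along the tree are unambiguous; nonnegativity and the two stated properties do the actual work.
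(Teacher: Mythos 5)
Your proof is correct and follows the same skeleton as the paper's: attach the triple $(t_J,A_J,\lambda_J)$ to each node, verify the parent--child recursions, iterate the main inequality down $n$ generations, and invoke the obstacle condition. The one place where you genuinely diverge is the limiting step, and your version is cleaner. The paper passes to the pointwise limit of the \emph{entire} sum, obtaining $F(t,A,\lambda)\ge\frac{1}{|I|}\int_I F(f(x),A(x),\lambda-\mathcal{A}f(x))\,dx$; this forces it to assume a priori that the Carleson sequence is finite, to invoke the Lebesgue differentiation theorem to make sense of $A(x)$, and then to remove the finiteness assumption by a further approximation. You instead discard, already at finite depth $n$, all terms with $\lambda_J\ge0$ using $F\ge0$, keep only the obstacle terms, and identify $\bigcup_n E_n$ with $\{\mathcal{A}f>\lambda\}$ by monotone convergence of the partial sums $S_n(x)$ (which is valid since $\alpha_K\ge0$ and $f\ge0$ make $S_n$ nondecreasing, and strict inequality $\sup_n S_n(x)>\lambda$ is equivalent to $S_n(x)>\lambda$ for some $n$). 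This handles infinite Carleson sequences in one pass and needs neither the differentiation theorem nor the continuity of $F$, as you note. One small caveat: nonnegativity of $F$ is not literally stated in the Proposition's hypotheses, but it is assumed in the surrounding discussion (``any continuous positive function'') and the paper's own proof uses it in exactly the same way, so your explicit reliance on it is consistent with the intended statement.
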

\begin{proof}
	Let $f \geq 0$ be an integrable function on an interval $I$ and let $\{\alpha_J\}_{J \in \mathcal{D}(I)}$ be a Carleson sequence with constant at most $1$, then for all fixed $\lambda$ we have (by \eqref{Bellman.MainInequality.eq})
	\begin{align*}
		F(\langle f \rangle_I, A, \lambda) &= F\biggl( \frac{\langle f \rangle_{I_-} + \langle f \rangle_{I_+}}{2}, \frac{A_-+A_+}{2} + \alpha_I, \lambda \biggr) \\
		&\geq \frac{1}{2}\Bigl( F(\langle f \rangle_{I_-}, A_-, \lambda - \alpha_I \langle f \rangle_I) + F(\langle f \rangle_{I_+}, A_+, \lambda - \alpha_I \langle f \rangle_I) \Bigr),
	\end{align*}
	where $A = \frac{1}{|I|}\sum_{J \subseteq I} \alpha_J |J|$ and $A_{\pm}$ is defined analogously for $I_-$ and $I_+$.
	
	If we iterate this inequality we obtain
	\begin{align*}
		F(\langle f \rangle_I, A, \lambda) &\geq \frac{1}{2^N} \sum_{J \subset I,\, |J| = 2^{-N}|I|} F(\langle f \rangle_J, A_J, \lambda - \sum_{k=1}^N \alpha_{J^{(k)}} \langle f \rangle_{J^(k)} \mathbbm{1}_{J^{(k)}}(c_J) ),
	\end{align*}
	where $A_J = \frac{1}{|J|} \sum_{P \subseteq J} \alpha_P |P|$.
	
	If we assume a priori that the Carleson sequence $\alpha$ is finite then we can let $N \to \infty$ and obtain
	\begin{align*}
		F(\langle f \rangle_I, A, \lambda) &\geq \frac{1}{|I|} \int_I F(f(x), A(x), \lambda - \mathcal{A}f(x) ) \, dx \\
		&\geq \frac{1}{|I|} \int_{\{x \in I : \lambda - \mathcal{A}f(x) < 0\}} 1 \, dx &&\text{by } \eqref{Bellman.Obstacle}\\
		&= \frac{1}{|I|}|\{x\in I: \mathcal{A}f(x) > \lambda \}|.
	\end{align*}
	Here $A(x)$ is almost everywhere-defined as the limit of $A(J)$ as $J \to x$, this is easily seen to exist almost everywhere by the Lebesgue differentiation theorem.

	Letting the number of non-zero elements of $\{\alpha_J \}_{J \in \mathcal{D}(I)}$ tend to infinity and then taking the supremum in the definition of $\mathbb{B}$ we obtain
	\[
		F(\langle f \rangle_I, A, \lambda) \geq \mathbb{B}(\langle f \rangle_I, A, \lambda).
	\]
\end{proof}
\begin{remark}
	Note that we don't know yet if the function $\mathbb{B}$ is continuous, thus finding a minimizer in the space of continuous functions might not give us the true Bellman function. It turns out, however, that assuming continuity
	(actually $C^1$ smoothness) we are able to find a positive function satisfying \eqref{Bellman.MainInequality.eq} and \eqref{Bellman.Obstacle} which moreover is best possible without the a priori assumption of smoothness.
	We show this in the last section.
\end{remark}

We have therefore seen that finding any positive continuous function $F$ satisfying \eqref{Bellman.MainInequality.eq} and \eqref{Bellman.Obstacle} will give us an upper bound for $\mathbb{B}$. In the next section we find such a function.
\section{Finding the Bellman function candidate}

Our goal now is to find the smallest continuous function $F$ satisfying \eqref{Bellman.MainInequality.eq} and \eqref{Bellman.Obstacle}. As we remarked after Proposition \ref{Bellman.BellmanDoesIt}, we will assume a priori
that $F$ is $C^1$. Moreover, we will restrict the minimization space even more by requiring $F$ to have the same kind of homogeneity that the true $\mathbb{B}$ must have, i.e.:
\[
	\mathbb{B}(\eta t, A, \eta \lambda) = \mathbb{B}(t,A,\lambda) \quad \forall \eta >0, \lambda >0.
\]
This in principle might make our candidate for Bellman function larger than the one we could find without requiring such homogeneity. However, the optimal Bellman function satisfies this identity, so requiring $F$ to also satisfy it will not
prevent us from finding it.

Assuming smoothness we can write the Main Inequality \eqref{Bellman.MainInequality.eq} as a concavity condition, together with a monotonicity property along certain characteristics. More precisely, if $F$ is a smooth positive function, then
\eqref{Bellman.MainInequality.eq} together with \eqref{Bellman.Obstacle} and the above homogeneity is equivalent to the following conditions:
\begin{enumerate}
	\item $F$ is nonnegative, and concave in the first two variables.
	\item $F(t,A,\lambda)$ is increasing in the direction $(0,1,t)$.
	\item $F(st,A,s\lambda) = F(t,A,\lambda)$ for all $s > 0$.
	\item $F(t,A,\lambda) = 1$ whenever $\lambda <0$
\end{enumerate}
Indeed, if we let $\alpha = 0$ in \eqref{Bellman.MainInequality.eq} we see that $\mathbb{B}$ is concave in the variables $(t,A)$. If we set $A_1 = A_2 = A$ and $t_1 = t_2 = t$ 
then we see, by varying $\alpha$, that $\mathbb{B}(t,A,\lambda)$ is increasing in the direction $(0,1,t)$. This shows that any smooth $F$ satisfying \eqref{Bellman.MainInequality.eq} and \eqref{Bellman.Obstacle}, and which is also homogeneous in
the above sense, must also satisfy properties (1) through (4). Moreover, if $F$ is any smooth function satisfying properties (1) through (4), then it also must satisfy the main inequality \eqref{Bellman.MainInequality.eq} and the obstacle condition
\eqref{Bellman.Obstacle}. To see this observe that using property (1) we obtain \eqref{Bellman.MainInequality.eq} but with $\alpha = 0$, now property (2) allows us to insert an $\alpha$ as in the hypotheses for the main inequality since it describes
the path along which $F$ is increasing. The homogeneity and obstacle conditions are exactly (3) and (4) respectively, so this proves the equivalence.

Using the homogeneity property, we can reduce to finding $M:(0,\infty) \times [0,1] \to [0,\infty)$
such that if
\[
	F(x,y,z) = \begin{cases}
					M(x/z,y) & \text{if } z > 0\\
					1 & \text{if } z \leq 0,
	           \end{cases}
\]
then $F$ satisfies (1) through (4). These properties, when translated to the function $M$, become:
\begin{enumerate}
	\item $M$ is concave.
	\item $M_y - x^2 M_x \geq 0$.
	\item $M(x,y) \to 1$ when $x \to \infty$.
\end{enumerate}
The second of these properties tells us that $M$ is increasing along the characteristics
\[
	\begin{cases}
		\dot{x}(t) &= -x^2 \\
		\dot{y}(t) &= 1.
	\end{cases}
\]
Observe that these characteristics foliate $[0,\infty) \times[0,1]$. Also, if we move backwards in time along a characteristic which starts at $(x_0,1)$ with $x_0 \geq 1$, then this characteristic is above the curve $y = \frac{1}{x}$
and furthermore the characteristic tends to $(\infty, y_f)$ for some $0< y_f<1$. Using the fact that $M(x,y) \to 1$ as $x \to \infty$ and that we should decrease if we move backwards along these characteristics, we must have
\[
	M(x,y) \geq 1 \quad \text{whenever } y \geq \frac{1}{x}.
\]
However, we may assume (if our goal is to find the true Bellman function) that $M \leq 1$ since the true Bellman function $\mathbb{B}$ obviously cannot be larger than $1$, so we will actually impose
\[
	M(x,y) = 1 \quad \text{whenever } y \geq \frac{1}{x}.
\]

Observe that $\mathbb{B}(0,0,1)$ is $0$ and consider the straight line joining the point $(0,0)$ with $(x_1,y_1)$, where $x_1y_1 = 1$. Observe also that the pointwise minimum of any two positive continuous functions
satisfying \eqref{Bellman.MainInequality.eq} and \eqref{Bellman.Obstacle} will give us a smaller function which also satisfies these properties.

We know that the function $M$ should be $1$ at $(x_1,y_1)$ and that, along this line, $M$ should be concave.
The smallest concave curve joining these two points is obviously a straight line, so if defining $M$ in this way produces a smooth concave function satisfying the monotonicity property (2) then the optimal $M$ should be such a function.
Joining the point $(0,0)$ with the points $(x_1,y_1) \in [0,\infty) \times [0,1]$ satisfying $x_1y_1 = 1$ covers everything in the subdomain $0 \leq y \leq \min(x,x^{-1})$, so let us define $M$ here by
\[
	M(x,y) = \sqrt{xy}.
\]
This function is linear along straight lines joining $(0,0)$ with the boundary curve $xy = 1$ and is $1$ at this boundary. It is furthermore concave and satisfies the monotonicity property (2), so if we knew that $\mathbb{B}$ is
continuous then $\mathbb{B}(x,y,1)$ must be defined as above in this subdomain.

We are therefore left with defining $M$ in the upper triangle $\Omega_T = \{0 \leq x \leq y \leq 1\}$. Inspired by the linear behavior of $M$ in the first domain, we make the ansatz that $M$ is actually $1$-homogeneous in the whole domain. 

Let $f(x) = M(x,1)$ for $0 \leq x \leq 1$, then if $M$ is $1$-homogeneous we should have
\[
	M(x,y) = y f(x/y).
\]
If we want condition $(2)$ to hold then we should have
\[
	f(x/y)-(x/y)f'(x/y) - x^2f'(x/y) \geq 0.
\]
We expect this to be an equality on the boundary, which is when $y = 1$, so we will assume that
\[
	f(x) - f'(x)(x+x^2) = 0.
\]
This ordinary differential equation has the solutions
\[
	f(x) = C\frac{x}{1+x},
\]
and we should furthermore have $f(1) = M(1,1) = 1$. So $ C= 2$ and therefore
\[
	f(x) = \frac{2x}{x+1} \implies M(x,y) = \frac{2xy}{x+y}
\]
whenever $1 \geq y \geq x \geq 0$. One easily verifies that $M$ satisfies all the requirements in this subdomain, so we just have to show that the whole function $M$ is concave, but this immediately follows from the fact that $M$ is concave in each
subdomain and that $M$ is $C^1$ (as can be easily seen).

This gives us that
\[
	M(x,y) = \begin{cases}
	         	\frac{2xy}{x+y} &\text{if } 0 \leq x \leq y \leq 1 \\
	         	\sqrt{xy} & \text{if } 0 \leq y \leq \min(x,x^{-1}),
	         \end{cases}
\]
which using the homogeneity gives us the full function of Theorem \ref{MainTheorem}.

\section{Optimality}

In this section we show that the function found in the previous section is actually the exact Bellman function. We first we need a simple technical lemma which will allow us to deduce that $\mathbb{B}(\cdot,\cdot,1)$ must be superlinear
along lines joining $(0,0,1)$ to $(x,1,1)$.

\begin{lemma}\label{DydadicConcavity.Lemma}
	Let $f:[0,1] \to [0,\infty)$ be a function which satisfies
	\begin{equation} \label{DyadicConcavity.eq}
		f\Bigl( \frac{x+y}{2} \Bigr) \geq \frac{1}{2}f(x) + \frac{1}{2}f(y)
	\end{equation}
	for all $0 \leq x \leq y \leq 1$. Then we must have
	\[
		f(x) \geq f(1)x
	\]
	for all $x \in [0,1].$
\end{lemma}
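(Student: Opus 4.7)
The plan is to first establish the bound at dyadic rationals by induction on depth, and then extend to arbitrary $x\in[0,1]$ by pairing $x$ with a nearby dyadic point via \eqref{DyadicConcavity.eq}, without needing any continuity hypothesis on $f$.

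For the dyadic step, I would prove by induction on $n\geq 0$ that every dyadic $q=k/2^n\in[0,1]$ satisfies the slightly stronger bound $f(q)\geq (1-q)f(0)+qf(1)$, which in particular yields $f(q)\geq qf(1)$ since $f(0)\geq 0$. The base $n=0$ is trivial. In the inductive step, if $k$ is odd, set $q_\pm=(k\pm 1)/2^n$; both are of the form $j/2^{n-1}$, and $q=(q_-+q_+)/2$, so applying \eqref{DyadicConcavity.eq} and the inductive hypothesis at $q_\pm$ gives the stronger bound at $q$.

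For the extension, fix $x\in(0,1)$. If $x\leq 1/2$, pick a dyadic $q\in(x,2x]$ and set $r=2x-q\in[0,x)\subset[0,1]$. Then \eqref{DyadicConcavity.eq} together with the dyadic bound and $f(r)\geq 0$ gives $f(x)\geq \frac{1}{2}(f(q)+f(r))\geq \frac{q}{2}f(1)$. Since dyadic rationals are dense in $(x,2x]$, taking the supremum over admissible $q$ yields $f(x)\geq xf(1)$. If $x\in(1/2,1)$, define $x_0=x$ and $x_{k+1}=2x_k-1$; since $1-x_k=2^k(1-x)$, after finitely many steps $x_K\in[0,1/2]$, for which the previous case gives $f(x_K)\geq x_K f(1)$. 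Because $x_k=(x_{k+1}+1)/2$, the hypothesis \eqref{DyadicConcavity.eq} propagates the bound backwards: $f(x_k)\geq \frac{1}{2}(f(x_{k+1})+f(1))\geq \frac{1}{2}(x_{k+1}+1)f(1)=x_k f(1)$, and iterating $K$ times gives $f(x)\geq xf(1)$. The cases $x=0$ and $x=1$ are immediate.

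The main technical subtlety is the absence of a continuity assumption on $f$. This is bypassed because each step of the argument is a pointwise numerical inequality: the supremum over dyadic $q$ in the first case is a supremum of lower bounds on the fixed real number $f(x)$, and the iteration in the second case is finite (not a limit of function values) for every $x<1$.
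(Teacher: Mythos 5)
Your proof is correct: every step is a pointwise numerical inequality, so no continuity is needed, and the two delicate configurations (keeping $q_\pm=(k\pm1)/2^n$ inside $[0,1]$, and the doubling iterate $x_K=2x_{K-1}-1$ landing in $(0,1/2]$) are handled properly. The organization is, however, genuinely different from the paper's. You first establish the affine bound $f(q)\geq(1-q)f(0)+qf(1)$ on the dyadic grid of $[0,1]$ by induction on depth, and then reach a general $x$ in two stages: for $x\leq 1/2$ you write $x$ as the midpoint of a nearby dyadic $q$ and a sacrificial point $r=2x-q\geq 0$ whose value is discarded via $f\geq 0$; for $x>1/2$ you run the finite recursion $x_{k+1}=2x_k-1$ and propagate the bound back through midpoints with $1$. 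The paper instead works along the one-parameter family of segments $[x_0,1]$: midpoint concavity gives $f(x_0+\lambda(1-x_0))\geq\lambda f(1)$ for every dyadic $\lambda\in[0,1]$, and for a general $x$ it chooses points $x_N\to 0$ so that $x$ is \emph{exactly} a dyadic convex combination of $x_N$ and $1$, then lets $N\to\infty$ in the bound $f(x)\geq\frac{x-x_N}{1-x_N}\,f(1)$. Both arguments rest on the same two pillars --- density of dyadic combinations and nonnegativity of $f$ to throw away one endpoint value --- but the paper's single limiting step avoids your case split at $1/2$, while your version makes the dyadic induction fully explicit and replaces the limit $N\to\infty$ for $x>1/2$ by a finite backward iteration.
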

\begin{proof}
	We can assume without loss of generality that $x \in (0,1)$ and that $f(1) = 1$. Using \eqref{DyadicConcavity.eq} we have
	\begin{equation}\label{DyadicConcavity.eq2}
		f(x_0 + \lambda(1-x_0)) \geq \lambda
	\end{equation}
	for all dyadic rationals $\lambda \in [0,1]$, i.e.: numbers of the form $\lambda = k2^{-N}$ for $0 \leq k \leq 2^N$.
	
	For every $N \in \mathbb{N}$ let $k_N$ be the unique integer in $0 \leq k \leq 2^N x$ which satisfies
	\[
		\Bigl| x - \frac{k}{2^N} \Bigr| < \frac{1}{2^N}
	\]
	(this exists because the sequence $k \mapsto k2^{-N}$ is an arithmetic sequence of step $2^{-N}$).
	
	Observe that then, if we define
	\[
		x_N := \frac{2^Nx - k_N}{2^N - k_N} = \frac{x-\frac{k_N}{2^N}}{1-\frac{k_N}{2^N}},
	\]
	we must have $0 \leq x_N \leq \frac{1}{2^N(1-x)}$, so in particular $x_N \to 0$ as $N \to \infty$.
	
	But then
	\[
		\lambda := \frac{k_N}{2^N} = \frac{x-x_N}{1-x_N}
	\]
	is a dyadic rational and plugging it into \eqref{DyadicConcavity.eq2}, with $x_N$ playing the role of $x_0$, yields
	\[
		f(x) \geq \frac{x-x_N}{1-x_N},
	\]
	so letting $N \to \infty$ completes the proof.
\end{proof}

Using this lemma, together with the Main Inequality \eqref{Bellman.MainInequality.eq} we immediately have the following corollary:
\begin{corollary} \label{EasyObstacle}
	We have the following identity:
	\[
		\mathbb{B}(x,y,1) = M(x,y)
	\]
	for all $x,y$ in the subdomain $0 \leq y \leq \min(x,x^{-1})$.
\end{corollary}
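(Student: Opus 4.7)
The identity splits into $\leq$ and $\geq$. The upper bound $\mathbb{B}(x,y,1) \leq M(x,y) = \sqrt{xy}$ in the subdomain comes for free from Proposition~\ref{Bellman.BellmanDoesIt}: the function $F$ built from $M$ in the previous section was verified to satisfy properties (1)--(4), hence the Main Inequality~\eqref{Bellman.MainInequality.eq} and the Obstacle condition~\eqref{Bellman.Obstacle}, and so $\mathbb{B}(\cdot,\cdot,1) \leq F(\cdot,\cdot,1) = M$. Thus I will focus entirely on the reverse inequality.

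A preliminary step is to show that $\mathbb{B}(t,A,1) = 1$ whenever $tA > 1$ and $0 < A \leq 1$: apply~\eqref{Bellman.MainInequality.eq} with $t_1 = t_2 = t$, $A_1 = A_2 = 0$, $\alpha = A$, so that $\lambda' = 1 - At < 0$ and the Obstacle condition forces $\mathbb{B}(t,A,1) \geq \mathbb{B}(t,0,1-At) = 1$. With this in hand, the core is a line-through-the-origin argument. Fix $(x_0, y_0)$ in the subdomain with $y_0 < x_0$, and for each parameter $s_0 \in [y_0, \sqrt{x_0 y_0})$ put $(x_1, y_1) := (x_0/s_0, y_0/s_0)$, which by construction satisfies $y_1 \leq 1$ and $x_1 y_1 > 1$, so the preliminary step yields $\mathbb{B}(x_1,y_1,1) = 1$. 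The Main Inequality with $\alpha = 0$ applied to $(t_i, A_i) = s_i(x_1, y_1)$ and $\lambda' = 1$ shows that $g(s) := \mathbb{B}(sx_1, sy_1, 1)$ is midpoint concave on $[0,1]$, so Lemma~\ref{DydadicConcavity.Lemma} gives $g(s_0) \geq g(1)\,s_0 = s_0$. Taking the supremum as $s_0 \to \sqrt{x_0 y_0}$ produces $\mathbb{B}(x_0, y_0, 1) \geq \sqrt{x_0 y_0}$.

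The remaining boundary case is the diagonal $y_0 = x_0 \leq 1$, where $(x_1, y_1) = (1,1)$ lies on (not past) the curve $xy = 1$ and the preliminary step does not apply. I will handle this by a perturbation: apply~\eqref{Bellman.MainInequality.eq} at $(x_0, x_0, 1)$ with $t_1 = t_2 = x_0$, $A_1 = A_2 = x_0 - \alpha$, $\alpha \in (0, x_0)$, and use the homogeneity to rewrite the resulting lower bound as
\[
  \mathbb{B}(x_0, x_0, 1) \;\geq\; \mathbb{B}\!\left( \frac{x_0}{1 - \alpha x_0},\, x_0 - \alpha,\, 1 \right).
\]
A direct check (using $x_0 \leq 1$) shows that the perturbed point lies strictly inside the $y < x$ portion of the subdomain, so the previous step bounds the right-hand side below by $\sqrt{x_0(x_0-\alpha)/(1-\alpha x_0)}$; letting $\alpha \to 0^+$ yields $\mathbb{B}(x_0, x_0, 1) \geq x_0 = \sqrt{x_0 \cdot x_0}$, completing the proof. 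The main obstacle in carrying out this plan is the geometric bookkeeping: checking that each auxiliary point $(x_1, y_1)$ or perturbed point lands in the region where the preliminary identity or the recursive bound applies. Once this is arranged, Lemma~\ref{DydadicConcavity.Lemma} together with the Obstacle condition closes the argument with no further work.
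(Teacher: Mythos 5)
Your proof is correct, and for the lower bound it takes a genuinely different route from the paper's. Both arguments share the same skeleton: the Main Inequality \eqref{Bellman.MainInequality.eq} with $\alpha=0$ plus Lemma \ref{DydadicConcavity.Lemma} give superlinearity of $\mathbb{B}(\cdot,\cdot,1)$ along rays through the origin, so everything reduces to knowing that $\mathbb{B}=1$ at the far end of the ray. The paper scales exactly onto the curve $xy=1$ (taking $\lambda=\sqrt{xy}$) and must then prove $\mathbb{B}(x,y,1)=1$ \emph{on} that curve; this is not free, since the constant-function example there only gives $\mathcal{A}f=\mathbbm{1}_I$, whose superlevel set $\{\mathcal{A}f>1\}$ is empty, so the paper builds the explicit near-extremizers $f_n=\tfrac{2^nx}{2^n-1}\mathbbm{1}_{[0,1-2^{-n})}$ together with a Carleson sequence spread over $2^n-1$ small intervals. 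You instead stop short of the curve, at points with $x_1y_1>1$ strictly, where $\mathbb{B}=1$ follows at once from the Main Inequality and the obstacle condition \eqref{Bellman.Obstacle} (the abstract counterpart of the paper's Remark about constant functions), and then let $s_0\nearrow\sqrt{x_0y_0}$. This sidesteps any explicit construction, at the cost of treating the diagonal $y_0=x_0$ separately via a perturbation and the homogeneity $\mathbb{B}(\eta t,A,\eta\lambda)=\mathbb{B}(t,A,\lambda)$ --- a property the paper asserts for $\mathbb{B}$ and which follows by replacing $f$ with $\eta f$ in the definition, but which you should state explicitly as an ingredient. The bookkeeping checks out (when $x_0=1$ the perturbed point lands \emph{on} the curve $xy=1$ rather than strictly inside it, but your ray argument still applies there since it only needs $\tilde y<\tilde x$; and the degenerate case $y_0=0$ is trivial). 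What the paper's route buys is an explicit picture of the extremizers on the boundary $xy=1$, which foreshadows the self-similar constructions later in Section 4; your route is softer and shorter.
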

\begin{proof}
	We showed in the previous section that $\mathbb{B}(x,y,1) \leq M(x,y)$ for all $(x,y) \in \Omega'$. To show the reverse inequality notice that the Main Inequality \eqref{Bellman.MainInequality.eq} together with
	Lemma \ref{DydadicConcavity.Lemma} imply
	\begin{equation}\label{EasyObstacle.eq}
		\mathbb{B}(x,y,1) \geq \lambda \mathbb{B}\Bigl(\frac{x}{\lambda},\frac{y}{\lambda},1 \Bigr).
	\end{equation}
	We would be done if we can show that $\mathbb{B}(x,y,1) = 1$ whenever $xy = 1$. Indeed, then we can just use equation \eqref{EasyObstacle.eq} with $\lambda = \sqrt{xy}$.
	
	Fix $(x,y) \in \Omega'$ with $xy = 1$ and consider the function
	\[
		f_n = \frac{2^nx}{2^n -1}\mathbbm{1}_{[0,1-2^{-n})}.
	\]
	If $I$ is the interval $[0,1)$ then obviously $\langle f_n \rangle_I = x$. Consider also the Carleson sequence $\{\alpha_J\}_{J \in \mathcal{D}(I)}$ defined by
	\[
		\alpha_J = \begin{cases}
						\frac{y}{1-2^{-n}} & \text{if } J = [2^{-n}(k-1),2^{-n}k) \text{ and } k \in \{1, \dots, 2^n-1\}\\
						0 & \text{otherwise.}
		           \end{cases}
	\]
	Then we have
	\[
		\frac{1}{|I|}\sum_{J \in \mathcal{D}(I)} \alpha_J |J| = y\sum_{k=1}^{2^n-1} \frac{2^{-n}}{1-2^{-n}} = y.
	\]
	
	Also,
	\[
		\mathcal{A}f_n(t) = \begin{cases}
								\Bigl(\frac{2^n}{2^n-1}\Bigr)^2 & \text{if } 0 \leq t < 1- 2^{-n} \\
								0 & \text{otherwise,}
		                    \end{cases}
	\]
	hence
	\[
		\mathbb{B}(x,y,1) \geq 1-2^{-n}
	\]
	for all $n \geq 1$. Letting $n \to \infty$ yields the claim.
\end{proof}
\begin{remark}
	Observe that using the constant function $f(t) = x\mathbbm{1}_I(t)$ and the one-term Carleson sequence which is $y$ on $I$ and $0$ everywhere else, one obtains that $\mathcal{A}f = xy\mathbbm{1}_I$, hence $\mathbb{B}(x,y,1) = 1$ for all $xy >1$.
\end{remark}

Using Lemma \ref{DydadicConcavity.Lemma} in the same way, we just have to show that $\mathbb{B}(x,1,1) = \frac{2x}{x+1}$ to prove that $\mathbb{B}(x,y,1) = M(x,y)$ in the rest of the domain, however this turns out to be harder.

\begin{theorem}\label{MainTheorem2}
	Fix $x \in (0,1)$ and let $\epsilon > 0$. For any interval $I$ there exists a nonnegative function $f$ on $I$ with $\langle f \rangle_I = x$ and a Carleson sequence $\{\alpha_J\}_{J \in \mathcal{D}(I)}$ with Carleson
	constant at most one and verifying
	\[
		\frac{1}{|I|}\sum_{J \in \mathcal{D}(I)} \alpha_J |J| = 1
	\]
	such that
	\[
		\frac{1}{|I|}\Bigl| I \cap \Bigl\{ \sum_{J \in \mathbb{D}(I)} \alpha_J \langle f\rangle_J \mathbbm{1}_J > 1 \Bigr\} \Bigr| = \frac{2x}{x+1} + O(\epsilon).
	\]
\end{theorem}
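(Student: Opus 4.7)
The plan is to construct, for each $\epsilon > 0$, an explicit pair $(f, \{\alpha_J\})$ on $I$ achieving measure fraction $\frac{2x}{x+1} + O(\epsilon)$, via a recursive self-similar scheme. The construction is guided by the characteristics of the Bellman PDE $M_A - x^2 M_x = 0$ for $M(x,A) = \frac{2xA}{x+A}$ in the upper triangle; the line $A = 1$ is itself a characteristic of this PDE (both sides vanish identically there), which is precisely why the boundary $y = 1$ is the critical locus of the problem.

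First I would set up the iteration. Fix $N$ large (depending on $\epsilon$) and construct a nested chain $I = I_0 \supset I_1 \supset \cdots \supset I_N$ of dyadic subintervals with $|I_{k+1}| = |I_k|/2$, letting $I_k^- := I_k \setminus I_{k+1}$ denote the sibling. At each scale $k$ we will split $I_k$ and place Carleson weights by following a ``tight'' split of the Main Inequality. The key algebraic observation is that if the two children $(\langle f\rangle_{I_k^-}, A_{I_k^-})$ and $(\langle f\rangle_{I_{k+1}}, A_{I_{k+1}})$ lie on a common ray through the origin in the $(t,A)$-plane, then \eqref{Bellman.MainInequality.eq} becomes an equality for $M$, because $M(t,A) = \frac{2tA}{t+A}$ is $1$-homogeneous along such rays. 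Parametrizing by the slope $\gamma := A/t$, the idealized limit $\gamma = 1/x$ gives a trivial symmetric split preserving the state $(x,1,1)$ but leaving us stuck.

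Next I would evolve the state by perturbing $\gamma_k$ away from $1/x_k$ by $O(\epsilon)$ at each step, yielding a genuinely non-trivial split with $\alpha_{I_k} = 1 - \gamma_k x_k = O(\epsilon)$. The sub-state on $I_k^-$ stays on the characteristic $A = 1$ with a slightly larger $x$, while the sub-state on $I_{k+1}$ moves off the boundary. On each sibling $I_k^-$ we apply a scaled near-extremizer for its own sub-state (recursively of the same form, or using Corollary \ref{EasyObstacle} once the sub-state enters the lower-region regime $y \le x$). Iterating $N \sim 1/\epsilon$ times drives the chain sub-state into the region where $\mathbb{B} \equiv 1$ (the normalized product $xA/\lambda$ eventually exceeds $1$), at which point the simple terminal construction from the Remark after Corollary \ref{EasyObstacle} (a constant $f$ plus a single Carleson coefficient) realizes $\mathcal{A}f \equiv$ const $> 1$ on all of $I_N$.

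Finally I would estimate the total measure by summing the contributions over all scales. Because the tight point $\gamma = 1/x$ is a critical point of the measure-as-function-of-$\gamma$ (the first-order Taylor term of \eqref{Bellman.MainInequality.eq} vanishes along the characteristic direction $(0,1,t)$), the loss per level from the perturbation is only $O(\epsilon^2)$; summed over $N \sim 1/\epsilon$ levels this yields the claimed total error $O(\epsilon)$.

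The main obstacle is two-fold: (i) verifying that the perturbed split stays feasible throughout the iteration, i.e.\ that $\alpha_{I_k} \in [0,1]$, $A_{I_k} \leq 1$, and the Carleson condition at every intermediate dyadic subinterval are all preserved under the cascade of scaled sub-extremizers on the siblings $I_k^-$; and (ii) showing that the cumulative drift of the state along the characteristic, after $N \sim 1/\epsilon$ perturbations of size $\epsilon$, is $O(1)$ (and hence large enough to reach the terminal trivially-solvable region), while the Bellman loss per level remains $O(\epsilon^2)$. Reconciling these two scales is the technical crux of the construction.
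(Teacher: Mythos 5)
Your overall strategy --- iterate the Main Inequality with a split that is tight along the characteristic, advance the state along $A=1$ until it crosses the obstacle, and terminate with the trivial constant extremizer --- is the same as the paper's (Theorem \ref{Optimality} combined with Lemma \ref{TechnicalLemma}). But there are two problems. First, the geometry of your split is inconsistent: the child whose state ``moves off the boundary'' is pulled down the ray through the origin, i.e.\ \emph{away} from the obstacle region $xA \ge \lambda$, so it cannot be the one ``driven into the region where $\mathbb{B}\equiv 1$''; it is the other child, which stays on $A=1$ with $x_{k+1}=x_k\frac{1+\epsilon}{1-\epsilon}+2\epsilon>x_k$, that eventually reaches $x\ge 1$ and terminates the recursion. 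Second, and more seriously, you never say how to realize the off-boundary child constructively. Its normalized state is $\bigl((1-2\alpha)x,\,1-2\alpha\bigr)$ with $\alpha=\epsilon/x$: this is not ``of the same form'' as the states you recurse on (it has $A<1$), and it is not in the regime of Corollary \ref{EasyObstacle} either (for $x<1$ it still lies in the upper triangle $x\le y$). What is needed on that child is a function and Carleson sequence achieving exactly $(1-2\alpha)$ times the extremal fraction of the \emph{original} state $(x,1)$, i.e.\ a constructive realization of the superlinearity bound $E(sx,s)\ge sE(x,1)$. The paper does this in Lemma \ref{TechnicalLemma} by making $f$ self-similar: on the off-boundary child one places scaled copies of $f$ itself on a family of dyadic subintervals of total relative measure $\theta=1-2\alpha$ (chosen via the binary expansion of $\theta$) and zero elsewhere, and the existence of such an $f$ is obtained from a Banach fixed-point argument. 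This is the technical crux, and your proposal explicitly defers it (``reconciling these two scales is the technical crux''), so the argument is incomplete exactly where the real work lies.

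A smaller point on the error accounting: the per-step factor is $\frac{x_j}{x_j+2\epsilon}=1-O(\epsilon)$, not $1-O(\epsilon^2)$; the product of these $N\sim C(x)/\epsilon$ factors does not tend to $1-O(\epsilon)$ but to $\exp\Bigl(\int_1^{2/(1+x)}\frac{dy}{y(1-y(x+1))}\Bigr)=\frac{2x}{x+1}$, a Riemann-sum limit. Your ``$O(\epsilon^2)$ loss per level'' is correct only if measured as the defect of the Main Inequality relative to the candidate $M$ along the characteristic; the measure itself decays by a first-order factor at every level, and the target value $\frac{2x}{x+1}<1$ is precisely the accumulated product of these first-order losses, not $1$ minus a sum of second-order ones.
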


To prove this we will use the Main Inequality \eqref{Bellman.MainInequality.eq} iteratively to give a decomposition of $f$ consisting of constant functions on certain dyadic intervals, this also gives us the construction of
the sequence $\{\alpha_J\}_{J \in \mathcal{D}(I)}$. The basic idea is to, starting with a point $(x,1)$ in $\Omega'$, use \eqref{Bellman.MainInequality.eq} to split this point into another point $(x_+,1)$ on the boundary and
some point $(x_-,A_-)$. The point $(x_-,A_-)$ is then absorbed back into the initial point and we apply the same procedure to the point $(x_+,1)$ until we get to a point past the obstacle $xy \geq 1$ (where extremizers consist of constant
functions together with one-term Carleson sequences as in the Remark after Corollary \ref{EasyObstacle}).

In order to illustrate the idea we will first prove the lower bound for $\mathbb{B}$ without explicitly constructing the example. The way in which we prove the lower bound will make the construction more intuitive.

\begin{theorem}\label{Optimality}
	The Bellman function $\mathbb{B}$ satisfies
	\[
		\mathbb{B}(x,1,1) = \frac{2x}{x+1}
	\]
	for all $x \in [0,1]$.
\end{theorem}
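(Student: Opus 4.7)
The plan is to establish the lower bound $\mathbb{B}(x,1,1)\geq \tfrac{2x}{x+1}$ by iterating the Main Inequality \eqref{Bellman.MainInequality.eq}, using the values of $\mathbb{B}$ that have already been pinned down on the easy region (Corollary \ref{EasyObstacle} together with the Remark after it, which say that $\mathbb{B}(t,A,\lambda)=1$ whenever $tA\geq \lambda$).

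Starting at the point $(x,1,1)$, I would pick a parameter $\alpha\geq 0$ and use \eqref{Bellman.MainInequality.eq} to split it into two children $(t^-,A^-,\lambda')$ and $(t^+,A^+,\lambda')$ with $\lambda'=1-\alpha x$. The goal is to choose the split so that the right child lands on the boundary of the easy region (i.e.\ $t^+A^+=\lambda'$), where $\mathbb{B}=1$ is already known, while the left child, after rescaling by the homogeneity identity $\mathbb{B}(t,A,\lambda)=\mathbb{B}(t/\lambda,A,1)$ and an auxiliary application of the monotonicity property $\mathbb{B}(t,A,\lambda)\geq \mathbb{B}(t,A-\alpha',\lambda-\alpha't)$ (property (2) of Section 3), can be relocated back onto the line $A=1$ at some new point $(x_1,1,1)$ with $x_1>x$. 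This yields an inequality of the shape
\[
\mathbb{B}(x,1,1)\;\geq\; \tfrac{1}{2}\bigl(1+\mathbb{B}(x_1,1,1)\bigr).
\]

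Iterating the procedure produces a sequence $x=x_0<x_1<\cdots<x_N\leq 1$ and the telescoped bound
\[
\mathbb{B}(x,1,1)\;\geq\;\sum_{n=0}^{N-1}2^{-(n+1)}\cdot 1\;+\;2^{-N}\,\mathbb{B}(x_N,1,1).
\]
Calibrating the choices of $\alpha$ at each step so that $x_n\to 1$ at a prescribed geometric rate, and then passing to the limit $N\to\infty$ using $\mathbb{B}(1,1,1)=1$, should produce the claimed value $\tfrac{2x}{x+1}$.

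The hard part will be two-fold. First, one must verify feasibility at each step: the constraints $\alpha\in[0,\tfrac12]$, $t^-\geq 0$, $A^-\in[0,1]$, and the case conditions separating the easy region from case 1 must hold along the entire iteration, which in particular forces a nontrivial lower bound on $\alpha_n$ when $x_n$ is small. Second, one has to reduce the "left child" back to the boundary $A=1$ in a way that loses no mass; this is where the tangency condition $M_y-t^2M_t=0$ on $y=1$ computed in Section 3 becomes essential, as it guarantees that the monotonicity-based move onto $\{A=1\}$ is first-order sharp, ensuring the telescoping series converges to exactly $\tfrac{2x}{x+1}$ rather than to a strictly smaller constant.
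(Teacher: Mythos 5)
Your proposal has a structural gap that cannot be patched by calibration. The quickest way to see it: your telescoped bound reads $\mathbb{B}(x,1,1)\geq 1-2^{-N}+2^{-N}\mathbb{B}(x_N,1,1)$, which tends to $1$ as $N\to\infty$, contradicting the upper bound $\mathbb{B}(x,1,1)\leq\frac{2x}{x+1}<1$ already supplied by the supersolution $M$ of Section 3. The unjustifiable step is the ``relocation'' of the left child back onto $\{A=1\}$. The monotonicity property only moves points in the direction of \emph{decreasing} second coordinate (you quote it correctly as $\mathbb{B}(t,A,\lambda)\geq\mathbb{B}(t,A-\alpha',\lambda-\alpha' t)$), whereas your left child has $A^-=1-2\alpha<1$ and you need to \emph{raise} its second coordinate to $1$; that is the wrong direction of the inequality. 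The only available tool for raising $A$ is superlinearity along rays through the origin (Lemma \ref{DydadicConcavity.Lemma}), and it is not lossless: it gives $\mathbb{B}(t^-,A^-,1)\geq A^-\,\mathbb{B}(t^-/A^-,1,1)$, and the factor $A^-<1$ is precisely the loss your additive telescoping omits. There is also a feasibility obstruction you mention but underestimate: to land the right child on $\{t^+A^+\geq\lambda'\}$ in a single application of the Main Inequality you need $t^+\geq\lambda'=1-\alpha x$ together with $t^+\leq 2x$ and $\alpha\leq\frac12$ (so that $A^-\geq0$), which is impossible for $x<\frac25$; no choice of $\alpha$ fixes this, so the very first step of your iteration cannot even be performed for small $x$.

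The paper's argument inverts the roles of the two children. The child that gets iterated is the one that \emph{stays} on $\{A=1\}$ and drifts by $O(\epsilon)$ per step toward the obstacle $xy\geq1$ (where $\mathbb{B}=1$ by the Remark after Corollary \ref{EasyObstacle}); the other child is chosen to be the shrunk copy $\bigl(1-\tfrac{2\epsilon}{x}\bigr)\cdot(x,1)$ of the \emph{parent}, which is absorbed back into $E(x,1)$ itself by superlinearity. This yields the multiplicative recursion $E(x,1)\geq\frac{x}{x+2\epsilon}E(x_+,1)$ with $x_+=x\frac{1+\epsilon}{1-\epsilon}+2\epsilon$; after $N=O(\epsilon^{-1})$ steps one reaches $x_N\geq1$, where $E=1$, and the product $\prod_{j}\frac{x_j}{x_j+2\epsilon}$ is shown by a Riemann-sum computation to converge to $\exp\bigl(\int_1^{2/(1+x)}\frac{dy}{y(1-y(x+1))}\bigr)=\frac{2x}{x+1}$ as $\epsilon\to0$. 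Your intuition that the tangency $M_y-t^2M_t=0$ on $\{y=1\}$ governs first-order sharpness is correct, but it manifests in this multiplicative infinitesimal scheme, not in an additive one with half the mass deposited on the easy region at each step. If you retain the factor $A^-$ that superlinearity forces on you, you are led back to essentially the paper's product.
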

\begin{proof}
	Let $E(x,y) = \mathbb{B}(x,y,1)$, then using the Main Inequality \eqref{Bellman.MainInequality.eq} we see that we have the following behavior:
	\[
		E(t,1) \geq \frac{1}{2}E\Bigl( \frac{t_1}{1-\alpha t}, A_1 \Bigr) + \frac{1}{2} E\Bigl(\frac{t_2}{1-\alpha t}, A_2 \Bigr) 
	\]
	whenever $t = \frac{t_1+t_2}{2}$ and $1 = \frac{A_1 + A_2}{2}+\alpha$. Letting $\epsilon > 0$, $x = t$ and $A_2 = 1$ we get
	\[
		E(x,1) \geq \frac{1}{2}\Bigl( E\Bigl( x - 2\epsilon, 1 - \frac{2\epsilon}{x} \Bigr) + E\Bigl( x_+, 1 \Bigr)\Bigr),
	\]
	where
	\[
		x_+ = x \frac{1+\epsilon}{1-\epsilon} + 2\epsilon.
	\]

	Since $\mathbb{B}$ is superlinear in the first two variables and $\mathbb{B}(0,0,1) = 0$, we must have
	\[
		E\Bigl( x - 2\epsilon, 1 - \frac{2\epsilon}{x} \Bigr) \geq \Bigl( 1-\frac{2\epsilon}{x} \Bigr)E(x,1)
	\]
	so putting everything together we obtain
	\begin{equation}\label{Bellman.Optimality.It2}
		E(x,1) \geq \frac{x}{x+2\epsilon} E(x_+,1).
	\end{equation}

	If we define inductively $x_{n+1} = x_n \frac{1+\epsilon}{1-\epsilon} + 2\epsilon$ and $x_0 = x$, then we easily see that
	\[
		x_n = \delta^n \Bigl( \frac{1}{1-\epsilon} + x \Bigr) - \frac{1}{1-\epsilon},
	\]
	where $\delta = \frac{1+\epsilon}{1-\epsilon}$.

	We want to stop the iteration once $x_n \geq 1$, and this happens when
	\[
		\delta^n \geq \frac{2-\epsilon}{1+x(1-\epsilon)},
	\]
	let $N = N(\epsilon,x)$ be the smallest integer for which the above inequality does not hold. Then iterating \eqref{Bellman.Optimality.It2} $N$ times we get (since $E(1,1) = 1$)
	\[
		E(x,1) \geq \prod_{j=0}^N \frac{x_j}{x_j+2\epsilon},
	\]
	it just suffices to give a lower bound for the right hand side.

	To this end observe that
	\begin{align*}
		\prod_{j=0}^N \frac{x_j}{x_j+2\epsilon} &\geq \exp\Bigl(-\sum_{j=0}^N \log\Bigl(1 + \frac{2\epsilon}{x_j}\Bigr)\Bigr) \\
		&\geq \exp \Bigl( - \sum_{j=0}^N \frac{2\epsilon}{x_j} \Bigr) \\
		&= \exp\Bigl( -2\epsilon \sum_{j=0}^N \frac{1}{x_j} \Bigr).
	\end{align*}

	Let us estimate $-2\epsilon \sum_{j=0}^N \frac{1}{x_j}$. Using the explicit formula for $x_n$ we have
	\begin{align*}
		-2\epsilon \sum_{j=0}^N \frac{1}{x_j} &= -2\epsilon \sum_{j=0}^N \frac{1}{\delta^j \bigl( \frac{1}{1-\epsilon} + x \bigr)- \frac{1}{1-\epsilon}} \\
		&= -2\epsilon\sum_{j=0}^N \biggl( \frac{1}{\delta^j \bigl( \frac{1}{1-\epsilon} + x \bigr)- \frac{1}{1-\epsilon}} - \frac{1}{\delta^j \bigl( 1 + x \bigr)- 1}\biggr) 
			+ \sum_{j=0}^N \frac{2\epsilon}{1-\delta^j(1+x)}.
	\end{align*}
	The first term tends to $0$ as $\epsilon \to 0$ and the second is a Riemann sum, indeed (recalling the definition of $N = N(x,\epsilon)$:
	\begin{align*}
		\sum_{j=0}^N \frac{2\epsilon}{1-\delta^j(1+x)} &= (1-\epsilon)\sum_{j=0}^N \frac{\delta^j\frac{2\epsilon}{1-\epsilon}}{\delta^j(1- \delta^j (1+x))} \\
		&= (1-\epsilon) \sum_{j=0}^N f(\delta^j)(\delta^{j+1}-\delta^j) \\
		&= \int_1^{\frac{2}{1+x}} f(y) \, dy + O(\epsilon),
	\end{align*}
	as $\epsilon \to 0$ and where
	\[
		f(y) = \frac{1}{y(1-y(x+1))}.
	\]
	It is easy to see that
	\[
		\int_1^{\frac{2}{1+x}} \frac{1}{y(1-y(x+1))} \, dy = \log\Bigl( \frac{2x}{x+1} \Bigr),
	\]
	which completes the proof of the lower bound.

\end{proof}

Let us now use these ideas to construct the example. There are two basic steps in the iteration: first we split the point $(x,1)$ into $(x_-,A_-)$ and $(x_+,1)$, then we absorb $(x_-,A_-)$ into $(x,1)$ and obtain a lower bound for $E(x,1)$
in terms of $E(x_+,1)$, we then iterate this until $x_+ >1$, where we stop because we know that $E(x_+,1)$ must be $1$ there. These two steps are imposing a certain self-similarity on $f$ and the Carleson sequence $\alpha$ in terms
of $(f_+,\alpha_+)$. The following Lemma, which is based on the ideas from \cite{Vasyunin2009}, makes this precise.

\begin{lemma} \label{TechnicalLemma}
	Fix an interval $I$ and let $g_+$ be a nonnegative function on $I_+$. Suppose also that $\alpha^+$ is a Carleson sequence adapted to $I_+$ with constant at most $1$ and such that
	\[
		\frac{1}{|I_+|} \sum_{J \in \mathcal{D}(I_+)} \alpha^+_J |J| = 1.
	\]
	If $\langle g_+ \rangle_{I_+} = x \frac{1+\epsilon}{1-\epsilon} + 2\epsilon$ for some $x \in (0,1)$ and a sufficiently small $\epsilon > 0$, then we can construct a function $f$ on $I$ and a Carleson sequence $\alpha$ adapted to $I$ with constant at most $1$ such that
	$\langle f \rangle_I = x$,
	\begin{equation} \label{TechnicalLemma.eq.1}
		\frac{1}{|I|} \sum_{J \in \mathcal{D}(I)} \alpha_J |J| = 1
	\end{equation}
	and
	\begin{equation} \label{TechnicalLemma.eq.2}
		\frac{1}{|I|} \Bigl| I \cap \Bigl\{ \sum_{J \in \mathcal{D}(I)} \alpha_J \langle f \rangle_J \mathbbm{1}_J > 1  \Bigr\} \Bigr| \geq \Bigl(\frac{x}{x+2\epsilon}\Bigr)
		\frac{1}{|I_+|} \Bigl| I_+ \cap \Bigl\{ \sum_{J \in \mathcal{D}(I_+)} \alpha_J^+ \langle g_+ \rangle_J \mathbbm{1}_J > 1  \Bigr\} \Bigr|.
	\end{equation}
\end{lemma}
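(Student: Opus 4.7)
The plan is to realize, at the level of functions, the algebraic iteration from the proof of Theorem~\ref{Optimality}, producing the self-similar extremizer advertised in the introduction. On $I_+$ I take $f = (1-\epsilon) g_+$ with $\alpha_J = \alpha^+_J$ for $J\in\mathcal{D}(I_+)$, and globally $\alpha_I = \epsilon/x$. Once the construction on $I_-$ below guarantees $\langle f\rangle_I = x$, on $I_+$ one has $\mathcal{A}f = \epsilon + (1-\epsilon)\mathcal{A}g_+$ and hence $\{\mathcal{A}f > 1\}\cap I_+ = \{\mathcal{A}g_+ > 1\}\cap I_+$ exactly. The whole difficulty lies on $I_-$, where I must reproduce, at the level of a concrete function and Carleson sequence, the ``absorption'' of $(x_-,A_-) = (1-2\epsilon/x)(x,1)$ back into $(x,1)$ that in Theorem~\ref{Optimality} was carried out via the superlinearity of Lemma~\ref{DydadicConcavity.Lemma}.

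The natural way is a self-similar nesting. Set $\rho := 1-2\epsilon/x$ and choose a decreasing chain $I = K_0 \supset K_1 \supset K_2 \supset \cdots$ of dyadic intervals with $K_{j+1}\subset (K_j)_-$ and $|K_{j+1}| = \rho\,|(K_j)_-|$. On each right-half $(K_j)_+$ place a dilated copy of $g_+$ whose values are rescaled by $(1-\epsilon)^{j+1}$, with Carleson weights given by the pullback of $\alpha^+$; set $\alpha_{K_j} = \epsilon/x$ at every level; and put $f\equiv 0$ and $\alpha\equiv 0$ on the waste regions $(K_j)_-\setminus K_{j+1}$ and on all remaining dyadic intervals. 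Since $|K_j| = (\rho/2)^j|I|\to 0$ this defines $f$ and $\alpha$ a.e.\ on $I$, and self-similarity reduces every verification to one geometric-series identity.

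The mean telescopes to $\langle f\rangle_I = (1-\epsilon)x_+/[2-(1-\epsilon)\rho] = x$; the Carleson total is $\sum_j(\epsilon/x)|K_j| + \sum_j|(K_j)_+| = |I|(2\epsilon/x+1)/(2-\rho) = |I|$; and the Carleson constant stays at most $1$ because by self-similarity the total mass inside each $K_j$ is exactly $|K_j|$, while intervals sandwiched between $(K_j)_-$ and $K_{j+1}$ carry only the $|K_{j+1}|$-mass. For the measure inequality, the identity $\sum_{j=0}^k \alpha_{K_j}\langle f\rangle_{K_j} = \epsilon\sum_{j=0}^k (1-\epsilon)^j = 1-(1-\epsilon)^{k+1}$ gives, for $y\in (K_k)_+$,
\[
	\mathcal{A}f(y) = 1-(1-\epsilon)^{k+1} + (1-\epsilon)^{k+1}\mathcal{A}g_+(T_k^{-1}y),
\]
where $T_k\colon I_+ \to (K_k)_+$ is the natural affine dilation; hence $\{\mathcal{A}f > 1\}\cap(K_k)_+$ is the $T_k$-pullback of $\{\mathcal{A}g_+ > 1\}$, while on every waste region $\mathcal{A}f = 1-(1-\epsilon)^{k+1} < 1$. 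Summing measures yields $|\{\mathcal{A}f > 1\}\cap I| = p_+\sum_j|(K_j)_+| = (x/(x+2\epsilon))p_+|I|$, where $p_+ := |\{\mathcal{A}g_+ > 1\}\cap I_+|/|I_+|$, which is exactly \eqref{TechnicalLemma.eq.2} with equality.

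The main obstacle is dyadic commensurability: a single dyadic sub-interval of $(K_j)_-$ has relative length $2^{-p}$, so the exact equality $|K_{j+1}|/|(K_j)_-| = 1-2\epsilon/x$ forces $\rho$ to be of this form. The natural fix is to replace the single interval $K_{j+1}$ by a disjoint union of dyadic sub-intervals of $(K_j)_-$ with total relative length $\rho$, on each of which the construction is run in parallel; the ``sufficiently small $\epsilon$'' hypothesis should then be read as ensuring both $\epsilon\le x$ (so $\alpha_I\le 1$) and that $\rho$ is a dyadic rational (or close enough that a small rescaling of the placed copies of $g_+$ restores the mean and Carleson-mass identities). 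All other computations above then go through verbatim.
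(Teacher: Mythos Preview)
Your construction is essentially the paper's, presented in unfolded rather than fixed-point form. The paper sets $\alpha_I=\epsilon/x$, $f=(1-\epsilon)g_+$ on $I_+$, and on $I_-$ places rescaled copies of $f$ itself on the intervals $I_j=[1/2-2^{-j},1/2-2^{-j-1})$ for those $j$ with $b_j=1$ in the binary expansion $\rho=1-2\epsilon/x=\sum_j b_j2^{-j}$; existence of the self-similar $f$ is then obtained as a Banach fixed point of the operator $Tf=(1-\epsilon)\sum_j b_j\mathbbm{1}_{I_j}S_{I_j}f+(1-\epsilon)\mathbbm{1}_{I_+}g_+$. Your nested chain $K_0\supset K_1\supset\cdots$ with copies of $g_+$ at the successive right halves is exactly what this recursion produces when unrolled, and your ``fix'' of replacing $K_{j+1}$ by a union of dyadic subintervals of total relative length $\rho$ is precisely the binary-expansion device.

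One clarification: there is no need to assume $\rho$ is a dyadic rational or to invoke any ``small rescaling''. The binary expansion $\rho=\sum_j b_j2^{-j}$ works for every $\rho\in[0,1]$, and with it all of your identities (mean, Carleson total, level-set measure) hold \emph{exactly}, because the total measure of the selected subintervals at each stage is exactly $\rho$ times the measure of the parent left half. The ``sufficiently small $\epsilon$'' is needed only to keep $\alpha_I=\epsilon/x\le 1$ and $\rho\ge 0$. With that adjustment, your argument is complete and matches the paper's.
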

\begin{proof}
	We will assume without loss of generality that $I = [0,1)$, also denote $\alpha = \frac{\epsilon}{x}$. Define $\alpha_J$ to be $\alpha$ if $J = I$ and $\alpha^+_J$ for $J \in \mathcal{D}(I_+)$.
	
	Define $f$ to be $(1-\epsilon)g_+$ on $I_+$ and denote $g_- = (1-\epsilon)^{-1}f \mathbbm{1}_{I_-}$, then
	\begin{multline*}
		\frac{1}{|I|}\Bigl| y \in I:\, \sum_{J \in \mathcal{D}(I)} \alpha_J \langle f \rangle_J \mathbbm{1}_J(y) > 1 \Bigr| = \\
		\frac{1}{2|I_-|}\Bigl| y \in I_-:\, \sum_{J \in \mathcal{D}(I_-)} \alpha_J \langle f \rangle_J \mathbbm{1}_J(y) > 1 - \epsilon \Bigr|\\
		+\frac{1}{2|I_+|}\Bigl| y \in I_+:\, \sum_{J \in \mathcal{D}(I_+)} \alpha_J \langle f \rangle_J \mathbbm{1}_J(y) > 1 - \epsilon \Bigr| \\
		=\frac{1}{2|I_-|}\Bigl| y \in I_-:\, \sum_{J \in \mathcal{D}(I_-)} \alpha_J \langle g_- \rangle_J \mathbbm{1}_J(y) > 1 \Bigr| \\
		+\frac{1}{2|I_+|}\Bigl| y \in I_+:\, \sum_{J \in \mathcal{D}(I_+)} \alpha_J \langle g_+ \rangle_J \mathbbm{1}_J(y) > 1 \Bigr|.
	\end{multline*}
	
	Let $I_j = [e_{j}, e_{j+1})$, where $e_j = \frac{1}{2} - 2^{-j}$, and suppose that $\alpha_{\widehat{I_j}} = 0$ for $j \geq 1$ and $\alpha_{I_-} = 0$, then
	\begin{equation}\label{TechnicalLemma.Proof.eq.1}
		\frac{1}{2|I_-|}\Bigl| y \in I_-:\, \sum_{J \in \mathcal{D}(I_-)} \alpha_J \langle g_- \rangle_J \mathbbm{1}_J(y) > 1 \Bigr| = \frac{1}{2}\sum_{j= 1}^\infty 2^{-j}\frac{1}{|I_j|}\Bigl| y\in I_j:\, \mathcal{A}(g_-\mathbbm{1}_j)(y) > 1 \Bigr|.
	\end{equation}
	
	Let $\theta = 1-2\alpha$ and write
	\[
		\theta = \sum_{j=1}^\infty 2^{-j}b_j
	\]
	for some binary sequence $\{b_j\}_{j \in \mathbb{N}}$ (i.e.: write $\theta$ in binary).
	
	For a given interval $J$ let $S_Jf$ be the scaled version of $f$ adapted to $J$, i.e.: if $J = [a,b)$ then
	\[
		S_J f(x) = f\Bigl( \frac{x-a}{b-a} \Bigr).
	\]
	Abusing notation, let us also denote by $S_J \alpha$ the scaled version of the Carleson sequence $\alpha$ to the dyadic subinterval $J$ of $I$, then we have
	\begin{multline*}
		\frac{1}{|J|}\Bigl|\Bigl\{y \in J:\, \sum_{K \in \mathcal{D}(J)} (S_J \alpha)_K \langle S_J f \rangle_K \mathbbm{1}_K(y) > 1 \Bigr\}\Bigr|= \\
		\frac{1}{|I|}\Bigl| \Bigl\{ y\in I:\, \sum_{K \in \mathcal{D}(I)} \alpha_K \langle f \rangle_K \mathbbm{1}_K(y) > 1 \Bigr\} \Bigr|.
	\end{multline*}
	
	Suppose that $(1-\epsilon)f$, when restricted to $I_j$, agrees with $S_{I_j}f$ for all $j \geq 1$ such that $b_j = 1$ and is $0$ otherwise. Suppose furthermore that the Carleson sequence $\alpha$ 
	also satisfies the same similarity, i.e.: if we scale to $I$ the restriction of $\alpha$ to $I_j$
	we obtain $\alpha$ again. If we denote by $\Xi$ the left-hand side in \eqref{TechnicalLemma.eq.2} then we could use \eqref{TechnicalLemma.Proof.eq.1} to obtain
	\[
		\Xi = \frac{1}{2}\sum_{j=1}^\infty 2^{-j} b_j \Xi + \frac{1}{2|I_+|}\Bigl| y \in I_+:\, \sum_{J \in \mathcal{D}(I_+)} \alpha_J \langle g_+ \rangle_J \mathbbm{1}_J(y) > 1 \Bigr|,
	\]
	hence
	\begin{align*}
		\Xi &= \Bigl(\frac{1}{1+2\alpha}\Bigr)\frac{1}{|I_+|}\Bigl| y \in I_+:\, \sum_{J \in \mathcal{D}(I_+)} \alpha_J \langle g_+ \rangle_J \mathbbm{1}_J(y) > 1 \Bigr| \\
		&= \Bigl(\frac{x}{x+2\epsilon}\Bigr)\frac{1}{|I_+|}\Bigl| y \in I_+:\, \sum_{J \in \mathcal{D}(I_+)} \alpha_J \langle g_+ \rangle_J \mathbbm{1}_J(y) > 1 \Bigr|,
	\end{align*}
	which is what we wanted.
	Note also that we could use the same method to compute the average of $f$ and it yields precisely the right amount: $x$.
	
	Therefore we just have to show that we can find a function $f$ and a Carleson sequence $\alpha$ satisfying these self-similarity conditions. Let us start with $f$: define the operator $T$ by
	\[
		Tf = (1-\epsilon)\sum_{j=1}^\infty b_j \mathbbm{1}_{I_j} S_{I_j} f + (1-\epsilon)\mathbbm{1}_{I_+} g_+.
	\]
	We need to show that $T$ has a fixed point in $L^1(I)$; we will do this following the steps of the proof of the Banach fixed point theorem.
	Let $f_0 = (1-\epsilon)g_+ \mathbbm{1}_{I_+}$ and define inductively
	\[
		f_{n+1} = Tf_n.
	\]
	We should show that $f_n$ is a Cauchy sequence in $L^1(I)$, but observe that
	\begin{align*}
		\|f_{n+1}-f_n\|_{L^{1}(I)} &= (1-\epsilon)\int_{I_-} \Bigl| \sum_{j=1}^\infty b_j \mathbbm{1}_{I_j} S_{I_j}(f_n) - \sum_{j=1}^\infty b_j \mathbbm{1}_{I_j} S_{I_j}(f_{n-1}) \Bigr| \\
		&= (1-\epsilon)\sum_{j=1}^\infty b_j \int_{I_j} |S_{I_j}(f_n) - S_{I_j}(f_{n-1})| \\
		&= (1-\epsilon)\sum_{j=1}^\infty b_j|I_j| \int_{I}|f_n-f_{n-1}| \\
		&= (1-\epsilon)\int_I |f_n-f_{n-1}| \sum_{j=1}^\infty b_j 2^{-j-1} \\
		&= \frac{(1-\epsilon)(1-2\alpha)}{2} \int_{I} |f_n-f_{n-1}|.
	\end{align*}
	The constant $\xi := \frac{(1-\epsilon)(1-2\alpha)}{2}$ is strictly less than $1$ and by induction we have
	\[
		\|f_{n+1}-f_n\|_{L^1(I)} \lesssim \xi^{n},
	\]
	hence the sequence is Cauchy. This finishes the proof of existence for $f$ since we can just define $f$ to be the limit in $L^1$ of the sequence $f_n$ defined above.

	To show the existence of the Carleson sequence we can follow the same steps as above, but now we don't have to deal with convergence issues. Indeed, start with a sequence as in the beginning of the proof
	and define inductively the $(n+1)$-th sequence $\alpha^{n+1}$ by inserting the entire dyadic tree of $\alpha^n$ at each $I_j$. At each step we are only changing the value of the sequence at deeper and deeper levels,
	so we can just define $\alpha_K$ as the the value of $\alpha_K^n$, where $n$ is the first integer at which the sequence $\alpha_K^n$ stabilizes.
\end{proof}

We are now ready to prove Theorem \ref{MainTheorem2}, we will use the same ideas and notation as in the proof of Theorem \ref{Optimality}. Given $\epsilon$, $I$ and $x \in (0,1)$ let $N$ be the smallest integer such that
\[
	\delta^n \geq \frac{2-\epsilon}{1+x(1-\epsilon)}.
\]
Let $f_1$ be the constant function $x$ on $I_+$ and let $\alpha^1$ be the one-term Carleson sequence which is $1$ at $I_+$. Now define the function $f_{n+1}$ and Carleson sequence $\alpha^{n+1}$
inductively by applying Lemma \ref{TechnicalLemma} to the function $g_+ := S_{I_+}(f_n)$ and the Carleson sequence $S_{I_+}(\alpha^n)$; let $f = f_N$ and $\alpha = \alpha^N$. Then, as in the proof of Theorem \ref{Optimality}, we have 
\[
	\frac{1}{|I|}\Bigl| \Bigl\{ y\in I:\, \sum_{J \in \mathcal{D}(I)} \alpha_J \langle f \rangle_J \mathbbm{1}_J >1 \Bigr\} \Bigr| \geq \exp\Bigl( \sum_{j=0}^N \frac{2\epsilon}{1-\delta^j(1+x)} \Bigr),
\]
which we showed to be
\[
	\frac{2x}{x+1} + O(\epsilon),
\]
and this is what we wanted to prove.

\bibliography{../bibliography}{}
\bibliographystyle{abbrv}

\end{document}